\DeclareMathOperator{\ind}{ind}
\DeclareMathOperator{\dom}{dom}
\newtheorem{thm}{Theorem}[section]
\newtheorem{lem}[thm]{Lemma}
\newtheorem{ex}[thm]{Example}
\DeclareMathOperator{\supp}{supp}
\newcommand{\C}{\mathbb{C}}
\newcommand{\R}{\mathbb{R}}
\newcommand{\Z}{\mathbb{Z}}
\newcommand{\la}{\lambda}
\long\def\comment#1{{}}
 \title{Closable Hankel operators and moment problems}
 \author{Christian Berg and Ryszard Szwarc}
\begin{document}

 \maketitle

\begin{abstract} In a paper from 2016 D. R. Yafaev considers Hankel operators associated with  
Hamburger moment sequences $q_n$ and claims that the corresponding Hankel form is closable if and only if the moment sequence tends to 0. The claim is not correct, since we prove closability for any indeterminate moment sequence but also for certain determinate moment sequences corresponding to measures with finite index of determinacy. It is also established that Yafaev's result holds if the moments satisfy $\root{2n}\of{q_{2n}}=o(n)$.  
\end{abstract}

{\bf Mathematics Subject Classification}: Primary 47A05; Secondary 47B25, 47B35

{\bf Keywords}. Hankel operators, moment problems.
\section{Introduction}

In \cite{Y} Yafaev considers Hankel operators associated with  Hamburger moment sequences
\begin{equation}\label{eq:mom}
q_n=\int_{-\infty}^\infty x^n\,dM(x), \quad n=0,1,\ldots,
\end{equation}
where $M$ is a positive measure on the real line such that the set of polynomials $\C[x]$ is contained in the Hilbert space $L^2(M)$.

  We use the notation of \cite{Y} and let $\mathcal D$ denote the dense subspace of $\ell^2=\ell^2(\Z_+)$ of complex sequences with only finitely many non-zero terms. 
The standard orthonormal basis in $\ell^2$ is denoted $e_n, n=0,1,\ldots$.

Furthermore, we let $A: \mathcal D\to \C[x]$ denote the operator
\begin{equation}\label{eq:pol}
Ag(x)= \sum_{n\ge 0} g_nx^n,\quad g=(g_0,g_1,\ldots)\in\mathcal D,
\end{equation}
considered as a densely defined operator from the Hilbert space $\ell^2$ to $L^2(M)$. 

The Hankel form $q[g,g]$ defined on $\mathcal D$ by
\begin{equation}\label{eq:han}
q[g,g]:=\sum_{n,m\ge 0} q_{n+m}g_n\overline{g_m},\quad g\in\mathcal D
\end{equation}
clearly satisfies
\begin{equation}\label{eq:fu}
q[g,g]=||Ag||^2_{L^2(M)},
\end{equation}
which gives the following result, see \cite[Lemma 2.1]{Y}.

\begin{lem} The form $q[g,g]$ is closable in  $\ell^2$ if and only if $A$ is closable.
\end{lem}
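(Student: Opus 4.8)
The plan is to exploit the identity $q[g,g]=\|Ag\|^2_{L^2(M)}$ from \eqref{eq:fu} and observe that, once both notions of closability are spelled out as conditions on sequences in $\mathcal D$, they coincide essentially verbatim. Recall that the form $q$, being nonnegative (since $q[g,g]=\|Ag\|^2\ge 0$) and Hermitian, is closable precisely when the following holds: whenever $g^{(k)}\in\mathcal D$ satisfies $g^{(k)}\to 0$ in $\ell^2$ and $(g^{(k)})$ is Cauchy for the form seminorm $g\mapsto q[g,g]^{1/2}$, then $q[g^{(k)},g^{(k)}]\to 0$. By polarization the sesquilinear form $q[g,h]=\sum_{n,m}q_{n+m}g_n\overline{h_m}$ is controlled by its diagonal, so it suffices to work with $q[g,g]$. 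On the operator side, $A$ is closable precisely when $g^{(k)}\to 0$ in $\ell^2$ together with $Ag^{(k)}\to h$ in $L^2(M)$ forces $h=0$. The crucial remark is that \eqref{eq:fu} identifies the form seminorm with $\|Ag\|_{L^2(M)}$, so the form-Cauchy condition is literally the statement that $(Ag^{(k)})$ is Cauchy in $L^2(M)$.

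For the implication that closability of $A$ gives closability of $q$, I would take a sequence $g^{(k)}\to 0$ in $\ell^2$ that is Cauchy for the form. By \eqref{eq:fu} we have $\|Ag^{(k)}-Ag^{(l)}\|_{L^2(M)}^2=q[g^{(k)}-g^{(l)},g^{(k)}-g^{(l)}]\to 0$, so by completeness of $L^2(M)$ there is $h$ with $Ag^{(k)}\to h$. Closability of $A$ then forces $h=0$, and hence $q[g^{(k)},g^{(k)}]=\|Ag^{(k)}\|^2\to\|h\|^2=0$, which is exactly the conclusion required for closability of $q$.

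For the converse, I would start from $g^{(k)}\to 0$ in $\ell^2$ with $Ag^{(k)}\to h$ in $L^2(M)$. Convergence makes $(Ag^{(k)})$ Cauchy, so by \eqref{eq:fu} the sequence is form-Cauchy, and closability of $q$ yields $\|Ag^{(k)}\|^2=q[g^{(k)},g^{(k)}]\to 0$. Since $\|Ag^{(k)}\|\to\|h\|$ by continuity of the norm, we conclude $h=0$, i.e.\ $A$ is closable.

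Because the core identity does all the work, I do not expect a genuine analytic obstacle here. The only point requiring care is to match the definition of closability for a nonnegative Hermitian form (the formulation "$g^{(k)}\to 0$ and form-Cauchy $\Rightarrow q[g^{(k)},g^{(k)}]\to 0$") with the graph definition of closability for the operator, and to note that it is precisely the completeness of $L^2(M)$ that lets one pass from "form-Cauchy" to "$Ag^{(k)}$ convergent" in the first direction.
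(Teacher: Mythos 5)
Your proof is correct and follows exactly the route the paper intends: the identity $q[g,g]=\|Ag\|^2_{L^2(M)}$ from \eqref{eq:fu} translates the standard sequential characterizations of closability for a nonnegative form and for a densely defined operator into one another, with completeness of $L^2(M)$ supplying the limit in the first direction. The paper gives no further detail (it simply cites Yafaev's Lemma 2.1 as an immediate consequence of \eqref{eq:fu}), so your write-up is a faithful expansion of the same argument.
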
  

Because of this result we shall only consider closability of $A$ and leave aside closability of the form  $q$. 

The main result \cite[Theorem 1.2]{Y} can be stated like this.

\begin{thm}\label{thm:Yafaev} Let $q_n$ denote the moments \eqref{eq:mom}. Then the following conditions are equivalent:
\begin{enumerate}
\item [(i)] The operator $A$ in \eqref{eq:pol} is closable.
\item[(ii)] $\lim_{n\to\infty} q_n=0$.
\item[(iii)] The measure satisfies $M(\R\setminus (-1,1))=0$, in other words $ \supp(M)\subseteq[-1,1]$ and $M(\{\pm 1\})=0$.
\end{enumerate}
\end{thm}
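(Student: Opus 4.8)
The plan is to prove the three-way equivalence by splitting it into three pieces: (ii) $\Leftrightarrow$ (iii) by elementary estimates on the moments, (iii) $\Rightarrow$ (i) by a complex-analytic convergence argument, and finally (i) $\Rightarrow$ (ii), which I expect to be the genuine difficulty. Note first that the standing hypothesis $\C[x]\subseteq L^2(M)$ gives $1\in L^2(M)$, hence $M(\R)=q_0<\infty$, so $M$ is a finite measure; this will let me use dominated convergence freely.

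For (iii) $\Rightarrow$ (ii): under (iii) the measure is concentrated on $(-1,1)$, where $x^n\to 0$ pointwise with $|x^n|\le 1\in L^1(M)$, so dominated convergence yields $q_n=\int x^n\,dM\to 0$. For (ii) $\Rightarrow$ (iii) I would work with the even moments. If $M$ charged $\{|x|\ge 1+\varepsilon\}$ with positive mass, then $q_{2n}\ge (1+\varepsilon)^{2n}M(\{|x|\ge 1+\varepsilon\})\to\infty$, contradicting $q_{2n}\to 0$; hence $\supp(M)\subseteq[-1,1]$. On $[-1,1]$ dominated convergence gives $q_{2n}\to M(\{-1\})+M(\{1\})$, so (ii) forces both atoms to vanish, which is exactly (iii).

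For (iii) $\Rightarrow$ (i) I would use the isometry $\ell^2\cong H^2(\D)$ sending $g\in\mathcal D$ to the polynomial $G(z)=\sum_n g_n z^n$. If $g^{(k)}\to 0$ in $\ell^2$, then Cauchy--Schwarz gives $\sup_{|z|\le r}|G_k(z)|\le \|g^{(k)}\|_2/\sqrt{1-r^2}\to 0$ for every $r<1$, so $G_k\to 0$ locally uniformly on $\D$ and in particular pointwise on $(-1,1)$. To check closability, suppose in addition $Ag^{(k)}\to f$ in $L^2(M)$; passing to a subsequence converging $M$-a.e.\ and recalling that (iii) makes $M$ live on $(-1,1)$, where $Ag^{(k)}(x)=G_k(x)\to 0$, I conclude $f=0$, so $A$ is closable.

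The main obstacle is the remaining implication (i) $\Rightarrow$ (ii). Contrapositively one would have to show that whenever $q_n\not\to 0$ — i.e.\ $M$ has mass outside $[-1,1]$ or an atom at $\pm 1$ — the operator $A$ is \emph{not} closable, by exhibiting $g^{(k)}\to 0$ in $\ell^2$ with $Ag^{(k)}$ converging in $L^2(M)$ to a nonzero limit. The trouble is that the local-uniform-convergence argument above controls $G_k$ only on $(-1,1)$ and says nothing on $\{|x|\ge 1\}$, where the polynomials $G_k$ may behave arbitrarily; any non-closability witness must therefore be manufactured on that region. This is precisely the step I do not expect to succeed in general: as the abstract already announces, for indeterminate moment sequences — whose representing measures have unbounded support, so $q_n\not\to 0$ — the operator $A$ is nevertheless closable, so (i) $\Rightarrow$ (ii), and hence the theorem as stated, must fail. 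A correct account replaces this implication by the finer analysis (via the entire functions attached to an indeterminate problem and the index-of-determinacy bookkeeping) developed in the rest of the paper.
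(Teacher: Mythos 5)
Your analysis agrees with the paper's own verdict: the paper likewise treats (ii)$\Leftrightarrow$(iii) and (iii)$\Rightarrow$(i) as elementary (you supply the dominated-convergence and the $\sup_{|z|\le r}|G_k(z)|\le \|g^{(k)}\|_{\ell^2}(1-r^2)^{-1/2}$ details that it omits), and its central point is precisely that (i)$\Rightarrow$(ii) is \emph{false}, refuted by Theorem~\ref{thm:indet} for indeterminate measures. Your proofs of the true implications are correct, and your identification of the irreparably failing implication is exactly the paper's.
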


It is elementary that (ii) and (iii) are equivalent  and that these conditions imply that  (i) holds. However, (i) does not imply (ii). We shall come back to where the proof in \cite{Y} breaks down, but start by giving  our main results:

\begin{thm}\label{thm:indet} 
If the measure $M$ is indeterminate, then $A$ is closable.
\end{thm}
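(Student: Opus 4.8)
The plan is to verify closability of $A$ directly from the definition: $A$ is closable precisely when every sequence $g^{(k)}\in\mathcal D$ with $g^{(k)}\to 0$ in $\ell^2$ and $Ag^{(k)}\to f$ in $L^2(M)$ forces $f=0$. So I would fix such a sequence, write $p_k=Ag^{(k)}$ (a sequence of polynomials converging to $f$ in $L^2(M)$), and aim to show that $f$, as an element of $L^2(M)$, is zero.

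The first step uses only the hypothesis $g^{(k)}\to 0$ in $\ell^2$. For $|z|<1$ the Cauchy--Schwarz inequality gives
\[
|p_k(z)|=\Bigl|\sum_n g^{(k)}_n z^n\Bigr|\le \|g^{(k)}\|_{\ell^2}\Bigl(\sum_{n\ge 0}|z|^{2n}\Bigr)^{1/2}=\frac{\|g^{(k)}\|_{\ell^2}}{\sqrt{1-|z|^2}},
\]
so $p_k\to 0$ pointwise on the open unit disc, and uniformly on each disc $|z|\le r<1$. In particular $p_k(x)\to 0$ for every $x\in(-1,1)$. The second step is where indeterminacy enters and is the crux of the argument. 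Let $P_n$ be the orthonormal polynomials of $M$. For an indeterminate moment problem the series $\sum_{n\ge 0}|P_n(z)|^2$ converges, uniformly on compact subsets of $\C$; this is the classical characterization of indeterminacy, and it is precisely the ingredient absent in general from Theorem \ref{thm:Yafaev}. Expanding a polynomial $p$ in the basis $(P_n)$ and applying Cauchy--Schwarz yields $|p(z)|\le \|p\|_{L^2(M)}\bigl(\sum_{n\ge 0}|P_n(z)|^2\bigr)^{1/2}$, with the second factor locally bounded. Applying this to the differences $p_k-p_j$ shows that the polynomials $p_k$, being Cauchy in $L^2(M)$, are locally uniformly Cauchy; hence they converge locally uniformly to an entire function which, after passing to an $M$-a.e.\ convergent subsequence, must coincide with $f$. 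Thus $f$ has an entire representative and $p_k(z)\to f(z)$ for every $z\in\C$.

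Combining the two steps, $f(z)=0$ for all $|z|<1$, and since $f$ is entire the identity theorem forces $f\equiv 0$, hence $f=0$ in $L^2(M)$; this establishes closability. I expect the only delicate point to be the passage from $L^2(M)$-convergence to locally uniform convergence of representatives, i.e.\ the fact that in the indeterminate case the closure of $\C[x]$ in $L^2(M)$ is a space of entire functions on which point evaluations are continuous. Once the uniform convergence of $\sum_{n\ge 0}|P_n(z)|^2$ on compacta is invoked, that passage is routine, and no hypothesis on the decay of the moments $q_n$ is needed.
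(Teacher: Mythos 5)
Your proof is correct and follows essentially the same route as the paper: both hinge on the fact that in the indeterminate case the closure of $\C[x]$ in $L^2(M)$ is a reproducing kernel Hilbert space of entire functions (via the locally uniform convergence of $\sum_n|P_n(z)|^2$), so that $L^2(M)$-convergence of the polynomials $Ag^{(k)}$ upgrades to locally uniform convergence to an entire representative of $f$. The only difference is cosmetic: the paper concludes by noting that the Taylor coefficients $g^{(k)}_j=D^j(Ag^{(k)})(0)/j!$ tend to $0$, whereas you use the elementary bound $|Ag^{(k)}(z)|\le\|g^{(k)}\|_{\ell^2}(1-|z|^2)^{-1/2}$ on the open unit disc together with the identity theorem.
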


\begin{thm}\label{thm:det} 
There exist determinate measures with unbounded support such that $A$ is closable. This holds in particular for all determinate measures with finite index of determinacy.
\end{thm}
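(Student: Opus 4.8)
The plan is to prove the stronger "in particular" assertion: every determinate measure $M$ with finite index of determinacy has $A$ closable. The existence statement then follows for free, since such measures with unbounded support are abundant: every $N$-extremal solution of an indeterminate moment problem is determinate, is carried by an unbounded discrete set, and has finite index of determinacy.

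I would work with the standard criterion that the densely defined $A$ is closable if and only if $A^*$ is densely defined, together with the explicit description $\dom(A^*)=\{\phi\in L^2(M):(\int x^n\phi\,dM)_n\in\ell^2\}$ and $(A^*\phi)_n=\int x^n\phi\,dM$. Everything is thus reduced to proving that $\dom(A^*)$ is dense. The idea is to import this density from the indeterminate case via the index of determinacy. By the Berg--Duran theory, finiteness of the index means that for a suitable real $t$ and integer $N$ the measure $dM_t:=(x-t)^{2N}\,dM$ is indeterminate; here I would take $t\in(-1,1)$ and not an atom of $M$, which is possible because the atoms are countable and finiteness of the index does not depend on the base point. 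Writing $A_t$ for the operator attached to $M_t$, Theorem~\ref{thm:indet} gives that $A_t$ is closable, so $A_t^*$ is densely defined.

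Next I would set up the transfer. Since $M(\{t\})=0$, the map $Vh=(x-t)^Nh$ is a unitary $V\colon L^2(M_t)\to L^2(M)$ with $V^*\phi=(x-t)^{-N}\phi$. Expanding $x^n(x-t)^N=\sum_{r=0}^N w_r\,x^{n+r}$, where $w_r$ are the coefficients of $(x-t)^N$, yields the identity of formal sequences
\begin{equation*}
(A_t^*V^*\phi)_n=\sum_{r=0}^N w_r\,(A^*\phi)_{n+r},\qquad n\ge 0 .
\end{equation*}
Denoting by $\mathcal L$ the banded operator $a\mapsto\bigl(\sum_{r=0}^N w_r a_{n+r}\bigr)_n$, this says $A_t^*V^*\phi=\mathcal L(A^*\phi)$. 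For every $\phi$ in the dense set $V(\dom A_t^*)$ (dense because $V$ is unitary and $\dom A_t^*$ is dense) we have $V^*\phi\in\dom A_t^*$, so the left-hand side, hence $\mathcal L(A^*\phi)$, lies in $\ell^2$.

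The main obstacle, and exactly where the choice $|t|<1$ is decisive, is to deduce from $\mathcal L(A^*\phi)\in\ell^2$ that $A^*\phi$ itself lies in $\ell^2$, i.e. that $\phi\in\dom(A^*)$. This is a self-contained lemma about $\mathcal L$: its formal null-solutions satisfy the recurrence with characteristic polynomial $(x-t)^N$, so they are spanned by $\{n^j t^n\}_{j<N}$, and since $|t|<1$ these all lie in $\ell^2$; moreover $\mathcal L$ is bounded and surjective on $\ell^2$ (the leading coefficient $w_N=1$ makes the recurrence solvable, and the homogeneous solutions decay). Hence any formal $a$ with $\mathcal L a\in\ell^2$ differs from an $\ell^2$ particular solution only by an element of this $\ell^2$ null-space, forcing $a\in\ell^2$; applying this to $a=A^*\phi$ gives $\phi\in\dom(A^*)$. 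Therefore $V(\dom A_t^*)\subseteq\dom(A^*)$ is dense, $A^*$ is densely defined, and $A$ is closable. The two points that will require careful justification are the Berg--Duran input (that one may take $t\in(-1,1)$ non-atomic with $(x-t)^{2N}M$ indeterminate) and the surjectivity of $\mathcal L$ together with the precise identification of its null-space.
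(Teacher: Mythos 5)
Your argument is essentially correct, but it takes a genuinely different route from the paper's. The paper also reduces everything to the indeterminate case via Theorem~\ref{thm:indet}, but by adding \emph{point masses} rather than a polynomial weight: starting from an N-extremal $M$ it deletes an atom at $\la_0$ with $|\la_0|<1$ and verifies closability directly on sequences, using that $\widetilde{A}g^{(n)}(\la_0)=\sum_k g^{(n)}_k\la_0^k\to 0$ because $|\sum_k g_k\la_0^k|\le\|g\|_{\ell^2}(1-|\la_0|^2)^{-1/2}$; for a general determinate $M$ with $\ind(M)=n$ it invokes \cite[Theorem 3.9]{B:D} to add $n+1$ atoms at points of $(-1,1)\setminus\supp(M)$, obtaining an N-extremal measure whose operator is closable, and then removes them one by one. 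You instead pass to the indeterminate measure $(x-t)^{2N}dM$, transfer density of the adjoint domain through the unitary $V$ and the banded operator $\mathcal L=(S-t)^N$ ($S$ the backward shift), and exploit that for $|t|<1$ this operator is surjective on $\ell^2$ with formal kernel spanned by $n^jt^n$, which lies in $\ell^2$. Both proofs hinge on the same restriction $|t|<1$ (resp.\ $|\la_0|<1$) and on the Berg--Dur\'an circle of ideas, but yours needs only the definition of the index (indeterminacy of $(x-t)^{2N}dM$ for some $N$) at the price of the operator lemma on $\mathcal L$, while the paper's is more elementary once \cite[Theorem 3.9]{B:D} is granted. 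Your lemma on $\mathcal L$ does check out: $\mathcal L a=b$ is solved in $\ell^2$ by convolving $b$ with the $\ell^1$-sequence $(t^j)_{j\ge 0}$ iterated $N$ times, and the homogeneous solutions of the order-$N$ recurrence with characteristic polynomial $(\la-t)^N$ are $p(n)t^n$ with $\deg p<N$.

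One factual slip should be repaired: an N-extremal solution of an indeterminate moment problem is \emph{not} determinate --- by definition it is one of infinitely many measures with the same moments. The determinate measures with unbounded support and finite index of determinacy are obtained by \emph{deleting} an atom from an N-extremal measure (the Stieltjes-type theorem recalled in the paper's introduction). With that correction your deduction of the existence statement from the ``in particular'' part goes through, since a measure of finite index necessarily has unbounded support.
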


\begin{thm}\label{thm:strongdet} Suppose the moments satisfy $\root{2n}\of{q_{2n}}=o(n)$.
Then the moment problem is determinate and if $A$ is closable, then condition (iii) holds.
\end{thm}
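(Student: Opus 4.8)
The plan is to pass to the adjoint and use that $A$ is closable if and only if $A^*$ is densely defined. A direct computation of the adjoint of $A\colon\ell^2\to L^2(M)$ shows that
$$\dom(A^*)=\Big\{f\in L^2(M):\ (d_n)_{n\ge0}\in\ell^2,\ \ d_n:=\int x^n f\,dM\Big\},\qquad A^*f=(d_n)_n,$$
since $\langle Ag,f\rangle_{L^2(M)}=\sum_n g_n\overline{d_n}$ and boundedness of this functional on $\mathcal D$ is equivalent to $(d_n)\in\ell^2$. Thus it suffices to prove that, under the growth hypothesis, every $f\in\dom(A^*)$ vanishes $M$-a.e.\ on $\{|x|\ge1\}$. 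Granting this inclusion, if $A$ is closable then $\dom(A^*)$ is dense, and a dense set cannot be contained in the proper closed subspace $\{f: f=0\ M\text{-a.e. on }\{|x|\ge1\}\}$ unless $M(\{|x|\ge1\})=0$; this is exactly condition (iii).

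First I would dispose of determinacy. From $\root{2n}\of{q_{2n}}=o(n)$ one gets $q_{2n}^{-1/(2n)}\ge 1/n$ for large $n$, so Carleman's series $\sum_n q_{2n}^{-1/(2n)}$ diverges and the Hamburger problem is determinate. Writing $q_{2n}^{1/(2n)}=n\epsilon_n$ with $\epsilon_n\to0$, the same estimate gives $q_{2n}s^{2n}/(2n)!\le(\tfrac12 e s\,\epsilon_n)^{2n}$, whence $\int\cosh(sx)\,dM<\infty$ for every $s>0$; hence $M$ has all exponential moments, $\int e^{s|x|}\,dM<\infty$ for all $s\ge0$. This is the feature of the hypothesis I shall exploit, and the one that fails for the indeterminate counterexamples of Theorem~\ref{thm:indet}.

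Now I fix $f\in\dom(A^*)$ and set $\nu:=f\,dM$, a finite complex measure (as $f\in L^2(M)\subseteq L^1(M)$) with all exponential moments. The exponential moments make $G(z):=\int e^{zx}\,d\nu(x)=\sum_n d_n z^n/n!$ an entire function, and since $(d_n)\in\ell^2$ is bounded, $|G(z)|\le C e^{|z|}$, so $G$ has exponential type at most $1$; on the imaginary axis $G$ is the Fourier transform of $\nu$ and is bounded. The crux is then the Paley--Wiener--Schwartz theorem: a tempered distribution whose Fourier transform extends to an entire function of exponential type $\le1$ with polynomial growth on $\R$ is supported in $[-1,1]$. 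Applied to $\nu$ this yields $\supp\nu\subseteq[-1,1]$, i.e.\ $f=0$ $M$-a.e.\ on $\{|x|>1\}$. To also kill the endpoints I would split $d_n=f(1)M(\{1\})+(-1)^n f(-1)M(\{-1\})+\int_{(-1,1)}x^n f\,dM$; the integral tends to $0$ by dominated convergence while $d_n\to0$, so separating even and odd indices forces $f(\pm1)M(\{\pm1\})=0$. Hence $f=0$ $M$-a.e.\ on $\{|x|\ge1\}$, which is the required inclusion.

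I expect the Paley--Wiener step to be the main obstacle: one must argue carefully that the hypothesis genuinely upgrades the mere existence of moments to entireness of $G$ of exponential type exactly $\le1$, and that the polynomial-growth requirement of Paley--Wiener--Schwartz is met (here boundedness on the line suffices). It is precisely here that the assumption cannot be dropped, since without all exponential moments $G$ need not be entire, and Theorem~\ref{thm:indet} exhibits closable operators with unbounded support.
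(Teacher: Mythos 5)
Your proof is correct and takes essentially the same route as the paper: the paper likewise reduces the statement to density of $\dom(A^*)=\mathcal D_*$, uses the hypothesis $\root{2n}\of{q_{2n}}=o(n)$ to show that the transform $\int e^{izt}u(t)\,dM(t)$ is entire (plus Carleman for determinacy), and then states that Yafaev's Paley--Wiener-type argument can be carried through --- an argument you have written out in full, including the endpoint analysis. The one step you should make explicit is the Phragm\'en--Lindel\"of upgrade from ``exponential type $\le 1$ and bounded on the line'' to the global estimate $Ce^{|\Im \zeta|}$ demanded by the Paley--Wiener--Schwartz theorem; with that standard addition your argument is complete.
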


The proof of the last theorem follows the proof of Yafaev, but as the first two theorems show, some kind of "strong" determinacy condition is necessary for $(i)\implies (iii)$ to hold. We do not know if the condition of Theorem~\ref{thm:strongdet} is optimal.

Let us give some background material for these theorems, see \cite{Ak} for details.
Associated with the moments \eqref{eq:mom} we have
the orthonormal polynomials $(P_n)$, which are  uniquely determined by the conditions
\begin{equation}\label{eq:orpo}
\int_{-\infty}^\infty P_n(x)P_m(x)\,dM(x)=\delta_{n,m},
\end{equation}
when we assume that all $P_n$ have  positive leading coefficients.

If the moment problem is indeterminate, there exists an infinite convex set $V$ of measures $M$ satisfying \eqref{eq:mom}. All measures $M\in V$ have unbounded support. Among the solutions are the Nevalinna extremal or in short the  N-extremal, which are precisely the measures $M\in V$ for which $\C[x]$ is dense in $L^2(M)$ by a theorem of M. Riesz. The N-extremal measures are discrete measures supported by the zero set $\Lambda$ of certain entire functions of minimal exponential type, i.e., of the form
$$
M=\sum_{\la\in\Lambda} c_\la \delta_{\la}, \quad c_{\la}>0.
$$ 

By a theorem going back to Stieltjes in special cases, the following remarkable fact holds: If one mass is removed from  $M$, then the new measure becomes determinate, i.e., 
$$
\widetilde{M}:=M-c_{\la_0}\delta_{\la_0},\quad \la_0\in\Lambda
$$ 
is determinate. For details see e.g. \cite{B:C}, where this result was exploited. The measure
$\widetilde{M}$ is a so-called determinate measure of index of determinacy 0 and if further $n\ge 1$ masses are removed we arrive at a determinate measure $M'$ of index of determinacy $n$, in symbols $\ind(M')=n$. See \cite{B:D}, which contains an intrinsic definition of such measures $M$ by the study of an index $\ind_z(M)$ associated to a point $z\in\C$. Finally, in \cite[Equation (1.5)]{B:D1} we define $\ind(M):=\ind_z(M)$ for $z\in\C \setminus\supp(M)$ because $\ind_z(M)$ is independent of $z$ outside the support of $M$. 

In  the indeterminate case  the
polynomials $P_n$ form  an orthonormal basis in $L^2(M)$ for all the N-extremal solutions $M$, and for the other solutions $M$ they form an orthonormal basis in the closure $\overline{\C[x]}^{L^2(M)}$.

It is known that this closure is isomorphic as Hilbert space with  the space $\mathcal E$ of entire functions of the form
\begin{equation}\label{eq:ent}
u(z)=\sum_{k=0}^\infty g_kP_k(z),\quad g\in\ell^2, \;z\in\C.
\end{equation}  
By Parseval's Theorem 
$$
\int_{-\infty}^\infty u(x)\overline{v(x)}\,dM(x)=\sum_{k=0}^\infty g_k\overline{h_k},\quad
u=\sum_{k=0}^\infty g_kP_k,\; v=\sum_{k=0}^\infty h_kP_k,\quad g,h\in\ell^2.
$$ 
Note that we have the orthogonal decomposition
\begin{equation}\label{eq:ortdec}
L^2(M)=\mathcal E \oplus \C[x]^\perp,\quad M\in V.
\end{equation}

\section{Proofs}

{\it Proof of Theorem~\ref{thm:indet}}

 Assume $g^{(n)}\in\mathcal D\to 0$ in $\ell^2$ and that $Ag^{(n)}\to f$
in $L^2(M)$. We have to prove that $f=0$. Clearly $f\in\mathcal E$.

 Since $\mathcal E$ is a reproducing kernel Hilbert space of entire functions, we know that convergence  in the Hilbert norm implies locally uniform convergence  in the complex plane, not only for the functions but also for derivatives of any order. Therefore
$$
g^{(n)}_k=\frac{D^k(Ag^{(n)})(0)}{k!}\to \frac{D^k f(0)}{k!},
$$
so the Taylor series of $f$ vanishes because in particular $g^{(n)}_k\to 0$ for $n\to\infty$ for any fixed $k$. $\quad\square$
 
\medskip
{\it Proof of Theorem~\ref{thm:det}}

Let us for simplicity first consider an N-extremal measure $M$ with mass $c>0$ at 0 and consider $\widetilde{M}:=M-c\delta_0$, which is a discrete determinate measure with unbounded support. A concrete example is studied in \cite[p. 128]{B:S1}. The measure $\widetilde{M}$ does not satisfy condition (iii) of Theorem~\ref{thm:Yafaev}. Let $A$ and $\widetilde{A}$ denote the operators \eqref{eq:pol}  with values in $L^2(M)$ and $L^2(\widetilde{M})$ respectively. 
We know that the operator $A:\mathcal D\to L^2(M)$ is closable by Theorem~\ref{thm:indet}.

 Assume that $g^{(n)}\to 0$ in $\ell^2$, where $g^{(n)}\in\mathcal D$, and that $\widetilde{A}g^{(n)}\to f$ in $L^2(\widetilde{M})$. We have $\widetilde{A}g^{(n)}(0)=g^{(n)}_0\to 0$, and therefore
$$
Ag^{(n)}(x)\to \begin{cases} f(x),\quad  & x\in \supp(\widetilde{M})\\
0,\quad &x=0
\end{cases}
$$
in $L^2(M)$ because $M=\widetilde{M}+c\delta_0$. Since $A$ is closable, we conclude that $f=0$.

Let us next modify the proof just given by removing one or finitely many
masses one by one at mass-points $\la_0$ satisfying $|\la_0|<1$ of an N-extremal measure $M$. In fact, for $n\to\infty$ also
$$
\widetilde{A}g^{(n)}(\la_0)=\sum_{k\ge 0} g^{(n)}_k\la_0^k \to 0,
$$
because
$$
|\sum_{k\ge 0} g^{(n)}_k\la_0^k|\le ||g^{(n)}||_{\ell^2} \left(1-|\la_0|^2\right)^{-1/2}.
$$

We finally  claim that if $M$ is  an arbitrary determinate measure with $\ind(M)=n\ge 0$, then the corresponding operator $A$ is closable.  In fact let $\Lambda\subset (-1,1)$ denote a set of $n+1$ points disjoint with $\supp(M)$. Such a choice is clearly possible since the support is discrete in $\R$. By \cite[Theorem 3.9]{B:D} the measure
$$
M^+:=M+\sum_{\la\in\Lambda}\delta_\la
$$
is N-extremal and the corresponding operator $A^+$ is closable by Theorem~\ref{thm:indet}. By removing the masses $\delta_\la$ for $\la\in\Lambda$ one by one we obtain that the operator $A$ associated with $M$ is closable.
 $\quad\square$

\medskip
{\it Proof of Theorem~\ref{thm:strongdet}}

Yafaev's proof is based on a study of the set $\mathcal D_*\subset L^2(M)$ for an arbitrary positive measure $M$ with moments of any order as in \eqref{eq:mom}, namely
\begin{equation}\label{eq:D*}
\mathcal  D_*:=\left\{u\in L^2(M) : u_n:=\int_{-\infty}^\infty u(t) t^n\,dM(t) \in \ell^2\right\}.
\end{equation}
 
Lemma 2.2 in \cite{Y} states that the adjoint $A^*$ of the  operator $A$ from \eqref{eq:pol} is given by $\dom(A^*)=\mathcal D_*$ and 
\begin{equation}\label{eq:A*}
(A^*u)_n=\int_{-\infty}^\infty u(t) t^n\,dM(t),\;n=0,1,\ldots,\quad u\in\mathcal D_*.
\end{equation} 
 
Yafaev uses the following result, Theorem 2.3 in \cite{Y}, which is not  true:

\medskip
{\bf Claim} The following conditions are equivalent:
\begin{enumerate}
\item[(iii)] of Theorem~\ref{thm:Yafaev},
\item[(iv)]  $\mathcal D_*$ is dense in $L^2(M)$.
\end{enumerate}

While it is correct that (iii) implies (iv), the converse is not true. In Theorem~\ref{thm:2} we prove that (iv) holds, if $M$ is an indeterminate  measure and hence (iii) does not hold.
 
For $u\in L^2(M)$ we consider the complex Fourier transform
\begin{equation}\label{eq:Fou}
f(z)=\int_{-\infty}^\infty e^{izt}u(t)\,dM(t),\quad z=x+iy\in\C,
\end{equation}
which is an entire function under the assumption $\root{2n}\of{q_{2n}}=o(n)$. In fact,
\begin{eqnarray*}
|f(z)| &\le& \int_{-\infty}^\infty e^{|t||y|}|u(t)|\,dM(t)=\sum_{n=0}^\infty \frac{|y|^n}{n!}\int_{-\infty}^\infty |t|^n|u(t)|\,dM(t)\\
&\le& \sum_{n=0}^\infty \frac{|y|^n}{n!} \sqrt{q_{2n}}||u||_{L^2(M)}<\infty,
\end{eqnarray*}
because $(\sqrt{q_{2n}}/n!)^{1/n}\to 0$ by Stirling's formula  and the assumption on the moments. In particular $\root{2n}\of{q_{2n}}\le Kn$ for a  suitable constant, and therefore the Carleman condition
$$
\sum_{n=0}^\infty \frac{1}{\root{2n}\of{q_{2n}}}=\infty
$$
secures that the moment problem is determinate, cf. \cite{Ak}.

The function $f$ is considered in \cite[formula (2.6)]{Y} as a $C^{\infty}$-function on the real line, and it is claimed  that it is equal to its Taylor series. This need not be the case under the assumptions in \cite{Y}, but holds true in the present case. Therefore the argument in Yafaev's paper can be carried through. $\quad\square$

\section{Additional results}

We use the following notation  for the orthonormal polynomials \eqref{eq:orpo}.
\begin{eqnarray}
P_n(x)&=&b_{n,n}x^n+b_{n-1,n}x^{n-1}+\ldots + b_{1,n}x+b_{0,n},
\label{eq:p}\\
x^n&=&c_{n,n}P_n(x)+c_{n-1,n}P_{n-1}(x)+\ldots + c_{1,n}P_1(x)+c_{0,n}P_0(x).\label{eq:x}
\end{eqnarray}

The matrices $\mathcal B=\{b_{i,j}\}$ and $\mathcal C=\{c_{i,j}\}$ with the assumption
\[
b_{i,j}=c_{i,j}=0\qquad{\rm for} \ i>j
\]
 are upper-triangular.
Since $\mathcal B$ and $\mathcal C$ are transition matrices between two sequences of linearly independent systems of functions, we have
\begin{equation}\label{eq:BC}
\mathcal B\mathcal C=\mathcal C\mathcal B=\mathcal I.
\end{equation}

Both matrices define operators in $\ell^2$  with domain $\mathcal D$ by defining the image of $e_n\in\mathcal D$ to be the $n$'th column of the matrix. We use the same symbol for these operators as their matrices.
   
In the following we assume the moment problem \eqref{eq:mom} to be indeterminate. In this case $\mathcal B$ extends to a bounded operator on $\ell^2$ which is Hilbert-Schmidt by \cite[Proposition 4.2]{B:S1}. We denote it here $\overline{{\mathcal B}}$, since it is the closure of $\mathcal B$. We know that $\overline{{\mathcal B}}$ is one-to-one by \cite[Proposition 4.3]{B:S1},
and then it is easy to see that $\mathcal C$ is closable and
\begin{equation}\label{eq:oC}
\dom(\overline{\mathcal C})=\overline{\mathcal B}(\ell^2),\quad\overline{\mathcal C}=\overline{\mathcal B}^{-1}.
\end{equation}

\begin{thm}\label{thm:2} Suppose  $M$ is indeterminate. Then the set $\mathcal D _*$ is dense in $L^2(M)$.
\end{thm}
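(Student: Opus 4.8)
The plan is to exploit the orthogonal decomposition $L^2(M)=\mathcal E\oplus\C[x]^\perp$ from \eqref{eq:ortdec}. The component $\C[x]^\perp$ lies entirely in $\mathcal D_*$: if $u\perp\C[x]$ then $u_n=\int u(t)t^n\,dM(t)=0$ for every $n$, so the moment sequence of $u$ is the zero sequence, which is trivially in $\ell^2$. Hence it suffices to produce a subset of $\mathcal D_*\cap\mathcal E$ that is dense in $\mathcal E$, since a linear subspace of $L^2(M)$ containing all of $\C[x]^\perp$ together with a dense subset of $\mathcal E$ is automatically dense in $L^2(M)$. One can also bypass the explicit construction altogether: $\mathcal D_*$ equals $\dom(A^*)$ by Yafaev's Lemma 2.2, $A$ is closable by Theorem~\ref{thm:indet}, and a densely defined operator is closable if and only if its adjoint is densely defined. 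I prefer the constructive route below, since it uses the matrices $\mathcal B,\mathcal C$ just introduced and pinpoints the dense subspace.

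To analyze $\mathcal D_*\cap\mathcal E$ I would write a typical $u\in\mathcal E$ as $u=\sum_{k\ge0}\hat u_kP_k$ with $\hat u=(\hat u_k)\in\ell^2$ and expand the moments through \eqref{eq:x}: since $t^n=\sum_{i\le n}c_{i,n}P_i(t)$ is a finite sum, orthonormality gives
$$
u_n=\int u(t)t^n\,dM(t)=\sum_{i=0}^n c_{i,n}\hat u_i .
$$
Thus the moment sequence of $u$ is obtained by applying the formal transpose of $\mathcal C$ to $\hat u$, each entry being a finite sum, and $u\in\mathcal D_*$ precisely when this sequence lies in $\ell^2$.

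The final step uses $\mathcal B\mathcal C=\mathcal I$ together with the boundedness and injectivity of $\overline{\mathcal B}$. For $w\in\ell^2$ set $\hat u=\overline{\mathcal B}^*w$, which is legitimate since $\overline{\mathcal B}$ is bounded; feeding this into the displayed formula and rearranging the (finitely many nonzero) terms collapses $\sum_i c_{i,n}\sum_j b_{j,i}w_j$ to $\sum_j(\mathcal B\mathcal C)_{j,n}w_j=w_n$, so the moment sequence of the corresponding $u$ is exactly $w\in\ell^2$ and $u\in\mathcal D_*$. As $w$ ranges over $\ell^2$ the vectors $\hat u=\overline{\mathcal B}^*w$ fill the range of $\overline{\mathcal B}^*$, which is dense in $\ell^2$ because $\overline{\mathcal B}$ is one-to-one; transporting through the isometry $\hat u\mapsto\sum_k\hat u_kP_k$ from \eqref{eq:ent} yields a dense subset of $\mathcal E$ contained in $\mathcal D_*$, completing the argument.

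The main obstacle is that the obvious candidate dense set, the polynomials, is useless here: already the constant function fails to lie in $\mathcal D_*$, since its moment sequence is a multiple of $(q_n)$, which grows far too fast to be square-summable in the indeterminate case. One therefore has to precondition the Fourier data by the Hilbert--Schmidt operator $\overline{\mathcal B}^*$ before expanding in the $P_k$, and the two facts that make this work are the inverse relation $\overline{\mathcal C}=\overline{\mathcal B}^{-1}$ of \eqref{eq:oC}, through which $\mathcal B\mathcal C=\mathcal I$ collapses the double sum, and the injectivity of $\overline{\mathcal B}$, which forces $\overline{\mathcal B}^*$ to have dense range. Checking that the interchange of summation is harmless is the one routine point to watch, and it is immediate because triangularity keeps every sum finite for fixed $n$.
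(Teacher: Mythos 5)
Your constructive argument is correct and is essentially the paper's own proof: both reduce via \eqref{eq:ortdec} to showing that $\{g\in\ell^2:\mathcal C^tg\in\ell^2\}$ is dense in $\ell^2$, exhibit inside it the range of the transposed Hilbert--Schmidt matrix (you apply $\overline{\mathcal B}^*$ to all of $\ell^2$ where the paper applies $\mathcal B^t$ to $\mathcal D$, which amounts to the same thing since the entries are real and $\overline{\mathcal B}^*$ is bounded), collapse the finite double sum using $\mathcal B\mathcal C=\mathcal I$, and obtain density from the injectivity of $\overline{\mathcal B}$. The shortcut you mention in passing --- that $\mathcal D_*=\dom(A^*)$ is dense because $A$ is closable by Theorem~\ref{thm:indet} --- is also valid and shorter, but it is not the route the paper takes.
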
  

\begin{proof} 
For $u\in \C[x]^\perp$ we have
$$
u_n = \int_{-\infty}^\infty u(x) x^n\,dM(x)=0,\quad n=0,1,\ldots,
$$
and for $u\in \mathcal E$  given by \eqref{eq:ent} we find
\begin{eqnarray*}
u_n &=& \int_{-\infty}^\infty u(x) x^n\,dM(x)=\sum_{k=0}^\infty g_k\int_{-\infty}^\infty P_k(x)x^n\,dM(x)\\
&=&\sum_{k=0}^n c_{k,n}g_k = (\mathcal C^t g)_n,
\end{eqnarray*}
where we have used \eqref{eq:x}.

By the orthogonal decomposition \eqref{eq:ortdec} we find
$$
\mathcal D_*=\left\{u=\sum_{k=0}^\infty g_kP_k \mid g\in\ell^2,\;\mathcal C^t g\in\ell^2\right\}\oplus \C[x]^\perp, 
$$
so $\mathcal D_*$ is dense in $L^2(M)$ if and only if
$$
 X:=\{g\in\ell^2 \mid \mathcal C^t g\in\ell^2\} \;\mbox{ is dense in }\; \ell^2.
$$

However, $\{\mathcal B^t\eta \mid \eta\in\mathcal D\}\subset X$ and the subset is already dense in $\ell^2$.

In fact, for $\eta\in\mathcal D$ we  have $\mathcal B^t \eta\in\ell^2$ because the matrix $\mathcal B$ is Hilbert-Schmidt.
 Furthermore, $\mathcal C^t(\mathcal B^t \eta)=\eta\in \ell^2$ because of \eqref{eq:BC}.

Finally, since $\overline{\mathcal B}$ is a bounded operator and one-to-one on $\ell^2$,  the set $\{\mathcal B^t\eta\, \mid \,\eta\in \mathcal D\}$ is dense in $\ell^2$.
\end{proof}

By Theorem~\ref{thm:indet} we know that the operator $A$ given by \eqref{eq:pol} is closable, when $M$ is indeterminate. We shall now describe the closure $\overline{A}$ in this case. For this we need the unitary operator 
$U:\ell^2\to\mathcal E$ given by $U(e_n)=P_n,\,n=0,1,\ldots$.

\begin{thm}\label{thm:oA} Suppose  $M$ is indeterminate. Then 
\begin{equation}\label{eq:oA}
\dom(\overline{A})=\overline{\mathcal B}(\ell^2), \quad\overline{A}=U\overline{\mathcal C}.
\end{equation}
For $\xi\in\dom(\overline{A})$ we have $\xi=\overline{\mathcal B}y$ for a unique $y\in\ell^2$ and the following series expansions hold
\begin{equation}\label{eq:oA1}
\overline{A}\xi(z)=\sum_{k=0}^\infty \xi_k z^k=\sum_{n=0}^\infty y_nP_n(z),\quad z\in\C,
 \end{equation}
uniformly for $z$ in compact subsets of $\C$.
\end{thm}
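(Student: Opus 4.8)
The plan is to reduce the statement to the facts about the transition matrices already recorded before the theorem, by observing that on the common domain $\mathcal D$ the operator $A$ factors as $A=U\mathcal C$. First I would verify this pointwise identity: for $g\in\mathcal D$, expanding each monomial by \eqref{eq:x} gives $Ag(x)=\sum_n g_nx^n=\sum_k\bigl(\sum_n c_{k,n}g_n\bigr)P_k(x)=\sum_k(\mathcal Cg)_kP_k(x)=U(\mathcal Cg)$. Here I use that $\mathcal C$ is upper triangular, so $\mathcal Cg\in\mathcal D\subset\ell^2$ and $U(\mathcal Cg)$ is defined. This identity $A=U\mathcal C$ on $\mathcal D$ is the crux: it transports all closure information from $\mathcal C$ (which was shown closable, with $\overline{\mathcal C}=\overline{\mathcal B}^{-1}$ and $\dom(\overline{\mathcal C})=\overline{\mathcal B}(\ell^2)$ in \eqref{eq:oC}) to $A$.

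Next I would pass to closures by a graph argument. Write $\Gamma(T)$ for the graph of an operator $T$ and set $W:=\mathrm{id}_{\ell^2}\times U$, a map from $\ell^2\times\ell^2$ into $\ell^2\times L^2(M)$. Since $U$ is unitary onto the closed subspace $\mathcal E$, the map $W$ is an isometry onto $\ell^2\times\mathcal E$, hence a homeomorphism onto a closed subspace, so $W$ commutes with the operation of taking closures. Because $W$ carries $\Gamma(\mathcal C)$ onto $\Gamma(U\mathcal C)=\Gamma(A)$, I obtain $\overline{\Gamma(A)}=W\bigl(\overline{\Gamma(\mathcal C)}\bigr)=W\bigl(\Gamma(\overline{\mathcal C})\bigr)=\Gamma(U\overline{\mathcal C})$, using closability of $\mathcal C$. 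Thus $\overline{\Gamma(A)}$ is itself a graph, $A$ is closable with $\overline{A}=U\overline{\mathcal C}$, and reading off domains together with \eqref{eq:oC} yields $\dom(\overline{A})=\dom(\overline{\mathcal C})=\overline{\mathcal B}(\ell^2)$, which is \eqref{eq:oA}.

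For the expansions \eqref{eq:oA1}, fix $\xi\in\dom(\overline{A})$ and write $\xi=\overline{\mathcal B}y$ with $y=\overline{\mathcal C}\xi$ the unique preimage (unique since $\overline{\mathcal B}$ is one-to-one). The polynomial expansion is then immediate: $\overline{A}\xi=U\overline{\mathcal C}\xi=Uy=\sum_n y_nP_n$, where the series converges in $L^2(M)$ by unitarity of $U$ and converges locally uniformly in $\C$ by the reproducing kernel property of $\mathcal E$ (norm convergence forces locally uniform convergence, exactly as invoked in the proof of Theorem~\ref{thm:indet}). For the Taylor expansion, I would pick $g^{(m)}\in\mathcal D$ with $g^{(m)}\to\xi$ in $\ell^2$ and $Ag^{(m)}\to\overline{A}\xi$ in $L^2(M)$; differentiating the polynomials $Ag^{(m)}(z)=\sum_k g^{(m)}_k z^k$ and again using locally uniform convergence of derivatives shows that the $k$-th Taylor coefficient of $\overline{A}\xi$ equals $\lim_m g^{(m)}_k=\xi_k$. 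Since $\overline{A}\xi$ is entire, its Taylor series converges to it locally uniformly on all of $\C$, giving $\overline{A}\xi(z)=\sum_k\xi_k z^k$.

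The proof is short once $A=U\mathcal C$ is noticed, so the two points deserving care are the following. The interchange of closure with $U$ at the level of graphs needs justification precisely because $U$ lands in $L^2(M)$ rather than $\ell^2$; the resolution is that $U$ is an isometry onto the \emph{closed} subspace $\mathcal E$. The second, and I expect the more delicate, point is the identification of the abstractly-defined function $\overline{A}\xi$ with the entire function whose Taylor coefficients are the coordinates $\xi_k$: this is where one must use locally uniform convergence of \emph{derivatives}, not merely of values, so that the Taylor coefficients of the limit may be read off termwise.
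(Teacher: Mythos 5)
Your proposal is correct and follows the paper's own route in its essential step: the factorization $A=U\mathcal C$ on $\mathcal D$, from which $\overline{A}=U\overline{\mathcal C}$ and $\dom(\overline{A})=\overline{\mathcal B}(\ell^2)$ follow via \eqref{eq:oC}. The paper simply asserts ``$A=U\mathcal C$, hence $\overline{A}=U\overline{\mathcal C}$''; your graph argument with $W=\mathrm{id}\times U$ supplies the justification the paper leaves implicit, the key point being exactly the one you flag, that $U$ is an isometry onto the \emph{closed} subspace $\mathcal E$.

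The one place you genuinely diverge is the identification of the Taylor coefficients of $\overline{A}\xi$ with the entries $\xi_k$. The paper works directly with $f=Uy=\sum_n y_nP_n$ and applies Cauchy's integral formula termwise (justified by uniform convergence on $|z|=1$) to get $f^{(k)}(0)/k!=\sum_n y_nb_{k,n}=(\overline{\mathcal B}y)_k=\xi_k$, i.e., it routes the computation through the matrix $\mathcal B$. You instead approximate $\xi$ by $g^{(m)}\in\mathcal D$ and use locally uniform convergence of derivatives in the reproducing kernel space $\mathcal E$ to pass the Taylor coefficients $g^{(m)}_k\to\xi_k$ to the limit, exactly as in the proof of Theorem~\ref{thm:indet}. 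Both are valid; the paper's version is more explicit about where the relation $\xi=\overline{\mathcal B}y$ enters, while yours reuses the closure definition and the derivative-convergence property already established, making the structural parallel with Theorem~\ref{thm:indet} visible. No gaps.
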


\begin{proof} We clearly have $A=U\mathcal C$, hence $\overline{A}=U\overline{\mathcal C}$, and therefore $\dom(\overline{A})=\dom(\overline{\mathcal C})=\overline{\mathcal B}(\ell^2)$.

For $\xi=\overline{\mathcal B}y$ for $y\in\ell^2$, we have $\overline{A}\xi=Uy$ and
$$
f(z):=Uy(z)=\sum_{n=0}^\infty y_nP_n(z),
$$
uniformly for $z$ in compact subsets of $\C$. By Cauchy's integral formula we therefore get
\begin{eqnarray*}
\frac{f^{(k)}(0)}{k!}&=&\frac{1}{2\pi i}\int_{|z|=1}\frac{f(z)}{z^{k+1}}\,dz
=\sum_{n=0}^\infty y_n\frac{1}{2\pi i}\int_{|z|=1}\frac{P_n(z)}{z^{k+1}}\,dz\\
&=&\sum_{n=0}^\infty y_nb_{k,n}=\xi_k.
\end{eqnarray*}
This shows the first expression in \eqref{eq:oA1}.
\end{proof}

We end with  an example related to Yafaev's condition (iii).

\begin{ex} Let $M$ be  a positive measure on $[-1,1]$ with $M(\{1\})=c>0$. The operator $A$ is not closable.
\end{ex}

In fact, define 
$$
g^{(n)}_k=\begin{cases} 1/n, \quad & 0\le k\le n-1,\\
0, \quad & k\ge n.
\end{cases}
$$
Then $g^{(n)}\to  0$ in $\ell^2$. We have, 
$$
Ag^{(n)}(x)=\begin{cases} 1, \quad & x=1,\\
\frac{1}{n}\frac{1-x^n}{1-x}, \quad & -1\le x<1.
\end{cases}
$$
Hence $Ag^{(n)}(x)\to \chi_{1}(x)$ pointwise and also in $L^2(M)$, where $\chi_B$ denotes the indicator function of a subset $B$ of the real line. Thus $A$ is not closable.

\noindent
Christian Berg\\
Department of Mathematical Sciences, University of Copenhagen\\
Universitetsparken 5, DK-2100 Copenhagen, Denmark\\
e-mail: {\tt{berg@math.ku.dk}}

\vspace{0.4cm}
\noindent
Ryszard Szwarc\\
Institute of Mathematics, University of Wroc{\l}aw\\
pl.\ Grunwaldzki 2/4, 50-384 Wroc{\l}aw, Poland\\ 
e-mail: {\tt{szwarc2@gmail.com}}

\end{document}